\numberwithin{equation}{section}
\theoremstyle{plain}
\newtheorem{Th}{Theorem}[section]
\newtheorem{Cor}[Th]{Corollary}
 \theoremstyle{definition}
\newtheorem{?}[Th]{Problem}
\begin{document}

\title{Christoffel-Darboux type identities for independence polynomial}

\author{Ferenc Bencs} 
 \address{E\"otv\"os Lor\'and Tudom\'anyegyetem
 \\ H-1117 Budapest
 \\ P\'azm\'any P\'eter s\'et\'any 1/C \\ Hungary} 
 \email{ferenc.bencs@gmail.com}

 \subjclass[2000]{Primary: 05C31.}

 \keywords{Christoffel-Darboux identities, independence polynomial, claw-free graph}

\begin{abstract} In this paper we introduce some Christoffel-Darboux type
 identities for independence polynomials. As an application, we give a new
 proof of a theorem of  M. Chudnovsky and P. Seymour, claiming that the independence
 polynomial of a claw-free graph has only real roots. Another application is related to a conjecture of Merrifield and Simmons.
\end{abstract}

\maketitle

\section{Introduction} 
The independence polynomial of a graph $G$ is defined by:
\small
\[
	I(G,x)=\sum\limits_{k=0}a_k(G)x^k,
\]
where $a_0(G)=1$, and if $k\ge 1$, then $a_k(G)$ denotes the number of independent sets of $G$ of size $k$.
The matching polynomial of a graph $G$ is defined in a similar way:
\[
	\mu(G,x)=\sum\limits_{k=0}(-1)^km_k(G)x^{n-2k},
\]
where $m_0(G)=1$, and if $k\ge 1$, then $m_k(G)$ is the number of matchings of size $k$.
\medskip

The Christoffel-Darboux identity is one of the most important tools in the theory of orthogonal polynomials. It asserts, that if $(p_n(x))$ is a sequence of orthogonal polynomials, then 
$$\sum_{j=0}^n \frac{1}{h_j}p_j(x)p_j(y)=\frac{k_n}{h_nh_{n+1}}\frac{p_n(y)p_{n+1}(x)-p_n(x)p_{n+1}(x)}{x-y},$$
where $h_j$ is the squared norm of $p_j(x)$, and $k_j$ is the leading coefficient of $p_j(x)$. A one-line consequence of this identity is the real-rootedness of the polynomial $p_n(x)$ for any $n$. Indeed, assume that $\xi$ is a non-real root of $p_n(x)$, and let $x=\xi$ and $y=\overline{\xi}$. Then the right hand side is $0$, while the left hand side is positive, since $\frac{1}{h_j}p_j(\xi)p_j(\overline{\xi})$ is nonnegative for all $j$, and for $j=1$, this term is positive.

When Heilmann and Lieb \cite{hei} introduced the theory of matching polynomials, they already noticed that the matching polynomials show strong analogies with the orthogonal polynomials. They proved the following Christoffel-Darboux identities:

\begin{Th} \label{M1}
Let $G$ be a graph and $u,v\in V(G)$. Let $\mathcal{P}_{u,v}$ be the set of paths from $u$ to $v$. Then
\[
	\mu(G-u,x)\mu(G-v,x)-\mu(G,x)\mu(G-u-v,x)=\sum\limits_{P\in \mathcal{P}_{u,v}}\mu(G-P,x)^2
\]
\end{Th}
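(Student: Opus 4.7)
The plan is to prove the identity by induction on $|V(G)|$, using the standard vertex-deletion recursion for the matching polynomial: for any graph $H$ and any $w\in V(H)$,
\[
\mu(H,x)=x\mu(H-w,x)-\sum_{w'\sim w}\mu(H-w-w',x).
\]
The base case $|V(G)|\le 2$ is immediate: either $u,v$ are non-adjacent (both sides are $0$) or they are joined by a single edge (both sides equal $1$).

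For the inductive step, I would apply the recursion at $u$ to $\mu(G,x)$, and separately apply it at $u$ inside the subgraph $G-v$ to rewrite $\mu(G-v,x)$. Substituting both expansions into the left-hand side of the claimed identity, the two terms of the form $x\,\mu(G-u,x)\mu(G-u-v,x)$ cancel, and the expression collapses into a sum indexed by the neighbors $w$ of $u$ in $G$. When $uv\in E(G)$, the term $w=v$ contributes exactly $\mu(G-u-v,x)^2$, matching the contribution of the length-one path $u\to v$ on the right-hand side.

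For each remaining neighbor $w$ of $u$ with $w\neq v$, the corresponding summand rearranges to
\[
\mu((G-u)-w,x)\,\mu((G-u)-v,x)-\mu(G-u,x)\,\mu((G-u)-w-v,x),
\]
which is precisely the Christoffel--Darboux expression for the smaller graph $G-u$ applied to the pair $(w,v)$. By the inductive hypothesis this equals $\sum_{P'\in\mathcal{P}_{w,v}(G-u)}\mu((G-u)-P',x)^2$. Every such $P'$, prepended with the edge $uw$, becomes a path $P\in\mathcal{P}_{u,v}(G)$ of length at least two satisfying $G-P=(G-u)-P'$, and this yields a bijection between pairs $(w,P')$ with $w\sim u$, $w\neq v$, $P'\in\mathcal{P}_{w,v}(G-u)$ and non-trivial paths from $u$ to $v$ in $G$. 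Summing over $w$ reconstructs the right-hand side.

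I expect the main obstacle to be purely bookkeeping: one must treat the case $uv\in E(G)$ carefully so that the single-edge path is handled by the $w=v$ term and not double-counted as part of the inductive contributions, and one must confirm that the map from pairs $(w,P')$ to genuine paths in $\mathcal{P}_{u,v}(G)$ is a bijection (in particular that the result $uwP'$ is still a simple path, which follows because $P'$ is a path in $G-u$ and therefore avoids $u$). No new analytic ingredient is needed beyond the recursion and the induction hypothesis.
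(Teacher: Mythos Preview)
Your inductive argument is correct. Expanding $\mu(G,x)$ and $\mu(G-v,x)$ via the recursion at $u$ produces exactly the cancellation you describe; the $w=v$ term (present precisely when $uv\in E(G)$) accounts for the single-edge path, and for each remaining neighbor $w$ the summand is the Christoffel--Darboux expression for $G-u$ at the pair $(w,v)$. The map $(w,P')\mapsto u\,w\,P'$ is indeed a bijection onto paths in $\mathcal{P}_{u,v}(G)$ of length at least two, since $P'\subseteq V(G-u)$ guarantees $u\notin P'$, and $G-P=(G-u)-P'$ matches the weights. The bookkeeping concerns you flag are the only delicate points, and you have them right.

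The paper does not give its own proof of this theorem; it is quoted as a result of Heilmann and Lieb. The paper does, however, state that all four identities (the two for matchings and the two for independence polynomials) ``can be proved with the idea of examining two `interesting' (matching or independent set) sets in $G$,'' and it carries out precisely that program for the independence-polynomial analog. That argument is bijective rather than inductive: one expands each product on the left-hand side as a double sum over pairs of matchings, sets up explicit cancellations between $\mathcal{F}(G-u)\times\mathcal{F}(G-v)$ and $\mathcal{F}(G)\times\mathcal{F}(G-u-v)$, and recognizes the surviving pairs by the component of the symmetric difference containing $u$ and $v$, which in the matching case is a $u$--$v$ path. So your route is genuinely different from the one the paper advocates: your induction is short, self-contained, and purely algebraic, while the bijective method makes the appearance of paths combinatorially transparent and is the template the paper actually needs to transport the identity to independence polynomials.
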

\begin{Th} \label{M2}
Let $G$ be a graph and $u\in V(G)$. Let $\mathcal{P}_u$ be the set of paths starting from $u$. Then
$$ \mu(G,x)\mu(G-u,y)-\mu(G-u,x)\mu(G,y)=$$
$$= (x-y)\sum\limits_{P\in \mathcal{P}_u}\mu(G-P,x)\mu(G-P,y)$$
\end{Th}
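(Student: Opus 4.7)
The plan is to prove Theorem~\ref{M2} by induction on the number of vertices of $G$, using the standard vertex-deletion recursion
\[
\mu(H,z) = z\,\mu(H-w,z) - \sum_{v\sim w} \mu(H-w-v,z),
\]
valid for any vertex $w$ of any graph $H$. Throughout, write $R(G,u,x,y):=\mu(G,x)\mu(G-u,y)-\mu(G-u,x)\mu(G,y)$ for the left-hand side of the identity. The base case is $G=\{u\}$, where $R(G,u,x,y)=x-y$ while $\mathcal{P}_u$ contains only the trivial one-vertex path, giving $(x-y)\cdot 1$ on the right.

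For the inductive step, I would apply the recursion to $\mu(G,x)$ and $\mu(G,y)$ inside $R(G,u,x,y)$. The two terms of the form $z\,\mu(G-u,x)\mu(G-u,y)$ combine to give $(x-y)\,\mu(G-u,x)\mu(G-u,y)$, while the remaining contributions reorganize as
\[
\sum_{w\sim u}\bigl[\mu(G-u,x)\mu(G-u-w,y)-\mu(G-u-w,x)\mu(G-u,y)\bigr]=\sum_{w\sim u}R(G-u,w,x,y).
\]
By the induction hypothesis applied to the graph $G-u$ and its vertex $w$, each summand $R(G-u,w,x,y)$ equals $(x-y)\sum_{Q\in\mathcal{P}_w^{G-u}}\mu((G-u)-Q,x)\mu((G-u)-Q,y)$, where $\mathcal{P}_w^{G-u}$ denotes the set of paths in $G-u$ starting at $w$.

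It remains to repackage the two contributions as a single sum over $\mathcal{P}_u$. Every element of $\mathcal{P}_u$ is either the trivial path $\{u\}$, which accounts for the term $\mu(G-u,x)\mu(G-u,y)$, or else begins with an edge $uw$ for some neighbor $w$ of $u$ and continues as a path $Q$ in $G-u$ starting at $w$; in the latter case $G-P=(G-u)-Q$. This bijection converts the expression above into exactly $(x-y)\sum_{P\in\mathcal{P}_u}\mu(G-P,x)\mu(G-P,y)$, completing the induction. The only place that deserves care is the path-to-pair bijection: nontrivial paths from $u$ must be faithfully encoded by removing $u$ and recording the residual path in $G-u$, which is automatic because a simple path does not revisit its starting vertex. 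The algebra itself is routine once the recursion is in place; the combinatorial matching of the two sides is the substantive step.
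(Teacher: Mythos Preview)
Your inductive argument is correct. The recursion applied at $u$ gives exactly the decomposition you describe, the induction hypothesis applies cleanly to each $R(G-u,w,x,y)$, and the bijection between nontrivial paths $P\in\mathcal{P}_u$ and pairs $(w,Q)$ with $w\sim u$ and $Q\in\mathcal{P}_w^{G-u}$ is sound in both directions (prepending $u$ to $Q$ always yields a simple path because $u\notin V(G-u)$).

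However, this is not the route the paper takes. The paper does not actually write out a proof of Theorem~\ref{M2}; it attributes the result to Heilmann and Lieb and only remarks that ``all four theorems can be proved with the idea of examining two `interesting' (matching or independent set) sets in $G$.'' The method the paper then carries out in detail for the independence-polynomial analogue (Theorem~\ref{T2}) is a direct double-counting argument: expand each product as a sum over pairs of independent sets (respectively, matchings), cancel the pairs not involving $u$, and identify the surviving pairs by looking at the connected component of $u$ in the union. For matchings that component is a path starting at $u$, which is why $\mathcal{P}_u$ appears. Your approach trades this global bijection for a local recursion plus induction: it is shorter to write and requires no combinatorial analysis of $M_1\cup M_2$, but it does not by itself explain \emph{why} paths are the natural indexing objects. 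The paper's bijective method, by contrast, makes the appearance of paths (and, in the independence case, of connected bipartite subgraphs) transparent, and it is this structural picture that the paper exploits later when specializing to claw-free graphs.
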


Note that Theorem~\ref{M2} provides a fast proof of the fact, that all matching polynomials have only real zeros. The argument is almost the same as the one given above for orthogonal polynomials.
\medskip
 
The aim of this paper is to extend these identities to independence polynomials, and to give a new proof of a theorem of P. Seymour and M. Chudnovsky, which claims that the independence polynomial of a claw-free graph has only real roots. A graph is claw-free if it does not contain and induced $K_{1,3}$, the complete bipartite graph on $1+3$ vertices. This is a generalization of the Heilmann-Lieb theorem, which claims that the matching polynomial has only real zeros, since every line graph is claw-free, and the independence polynomial of a line graph is essentially the matching polynomial of the original graph up to some simple transformations. 

To introduce our Christoffel-Darboux type identites for the independence polynomials, we need some notations. We denote the vertex set
and edge set of a graph $G$ by $V(G)$ and $E(G)$, respectively. Let $N_G(u)$
denote the set of neighbours of the vertex $u$.  Let $G-e$ denote the graph obtained from $G$ by deleting the
edge $e$. For $H\subseteq V(G)$, let $N[H]=N_G(H)\cup H$ be the closure of $H$. If $S\subseteq V(G)$, then $G-S$ denotes the induced subgraph of $G$  on the vertex set $V(G)\setminus S$. For a graph $G$ and $u,v\in V(G)$, let $d_G(u,v)$ denote the length of the shortest walk from $u$ to $v$ in $G$, if it exists, else let it be $\infty$.

We will prove the following theorems.

\begin{Th} \label{T1} Let $G$ be a graph, and $u,v \in V(G)$. Let $\mathcal{B}_{u,v}$ be the set of induced connected, bipartite graphs containing the vertices $u$ and $v$. Then
$$I(G-u,x)I(G-v,x)-I(G,x)I(G-u-v,x)=$$ 
$$= \sum\limits_{H \in \mathcal{B}_{u,v}} (-1)^{d_H(u,v)+1}x^{|V(H)|}I(G-N[H],x)^2$$
\end{Th}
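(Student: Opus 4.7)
The plan is to interpret both sides combinatorially as signed sums over pairs $(A,B)$ of independent sets in $G$ and to reduce the left-hand side to the right-hand side by a sign-reversing involution. Expanding the two products, $I(G-u,x)I(G-v,x)$ enumerates pairs with $u\notin A$ and $v\notin B$, while $I(G,x)I(G-u-v,x)$ enumerates pairs with $u,v\notin B$. Splitting pairs by the four Boolean indicators $[u\in A]$, $[u\in B]$, $[v\in A]$, $[v\in B]$, a brief case analysis shows that only four patterns have a nonzero signed multiplicity, and the involution $A\leftrightarrow B$ cancels the two of these in which $v\notin A\cup B$. Thus the left-hand side reduces to
$$\sum_{(A,B)\in\mathcal{B}} x^{|A|+|B|}\;-\;\sum_{(A,B)\in\mathcal{D}} x^{|A|+|B|},$$
where $\mathcal{B}=\{(A,B):v\in A,\ u\in B,\ u\notin A,\ v\notin B\}$ and $\mathcal{D}=\{(A,B):u,v\in A\setminus B\}$ (the latter forcing $uv\notin E(G)$).

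The structural observation is that, since $A$ and $B$ are both independent, every edge of $G[A\cup B]$ joins $A$ to $B$, so each connected component of $G[A\cup B]$ is bipartite with one part in $A$ and the other in $B$, and vertices of $A\cap B$ are isolated. Let $C_u, C_v$ be the components of $u$ and $v$ in $G[A\cup B]$. When $C_u\ne C_v$, define $\psi(A,B)$ by swapping the two sides of $C_u$; this flips whether $u$ lies in $A$ or in $B$ while fixing $v$, sends $\mathcal{B}$ to $\mathcal{D}$ and conversely, preserves $|A|+|B|$ and independence, and squares to the identity. The crucial point ensuring $\psi$ produces pairs in $\mathcal{B}\cup\mathcal{D}$ is that $C_u\ne C_v$ forces $uv\notin E(G)$, for otherwise the edge $uv$ would put $u$ and $v$ into a common component. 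Hence all pairs with $C_u\ne C_v$ cancel in the signed sum.

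It then remains to match the surviving pairs, those with $C_u=C_v$, against the triples contributing to the right-hand side. Given such a pair, set $H=G[V(C_u)]$; since $V(C_u)\subseteq A\cup B$, the edges of $H$ coincide with those of $C_u$, so $H$ is induced, connected, bipartite with parts $V(H)\cap A$ and $V(H)\cap B$, contains both $u$ and $v$, and hence lies in $\mathcal{B}_{u,v}$. Taking $A'=A\setminus V(H)$ and $B'=B\setminus V(H)$, one checks that $A',B'\subseteq V(G)\setminus N[H]$: any vertex of $(A\cup B)\cap N(H)$ outside $V(H)$ would be joined to $V(H)$ inside $G[A\cup B]$ and hence would itself lie in $C_u=V(H)$, a contradiction. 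Conversely, any triple $(H,A',B')$ with $H\in\mathcal{B}_{u,v}$ and $A',B'$ independent in $G-N[H]$ reconstructs a unique pair by attaching to $A'\cup B'$ the bipartition of $H$ oriented so that $v$ lies in $A$. A pair belongs to $\mathcal{B}$ precisely when $u,v$ lie on opposite sides of the bipartition of $H$, i.e.\ when $d_H(u,v)$ is odd, and to $\mathcal{D}$ precisely when $d_H(u,v)$ is even; since $(-1)^{d_H(u,v)+1}$ equals $+1$ in the first case and $-1$ in the second, and $|A|+|B|=|V(H)|+|A'|+|B'|$, the signed sum over the surviving pairs is exactly $\sum_{H\in\mathcal{B}_{u,v}}(-1)^{d_H(u,v)+1}x^{|V(H)|}I(G-N[H],x)^2$.

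The main obstacle I anticipate is the careful justification of the involution $\psi$: verifying that after exchanging the bipartition of $C_u$ the new pair still consists of independent sets and still belongs to $\mathcal{B}\cup\mathcal{D}$. This rests entirely on the observation that $u,v$ in distinct components of $G[A\cup B]$ implies $uv\notin E(G)$, without which the image of $\psi$ could fail to be independent and the cancellation of the ``off-diagonal'' pairs would break down.
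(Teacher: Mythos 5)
Your proposal is correct and follows essentially the same strategy as the paper: expand both products as signed sums over pairs of independent sets, cancel the pairs not containing both $u$ and $v$ and the pairs where $u,v$ lie in different components of $G[A\cup B]$ via sign-reversing involutions (color-swapping one component), and identify the surviving pairs with the triples $(H,A',B')$ on the right-hand side. The only differences are cosmetic — you swap the component of $u$ where the paper swaps that of $v$, and you spell out the independence and parity verifications somewhat more explicitly.
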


\begin{Th} \label{T2} Let $G$ be a graph, and $u\in V(G)$. Let $\mathcal{B}_u$ be
 the set of induced connected, bipartite graphs containing the vertex
 $u$. For an $H\in \mathcal{B}_u$, let $A(H)$ be the color class
 containing $u$, and let $B(H)$ be the color class
 not containing $u$, and $|A(H)|=a(H)$ and $|B(H)|=b(H)$. Then

$$I(G,x)I(G-u,y)-I(G-u,x)I(G,y)=$$
$$=\sum_{H\in \mathcal{B}_u}I(G-N[H],x)I(G-N[H],y)(x^{a(H)}y^{b(H)}-x^{b(H)}y^{a(H)}).$$
\end{Th}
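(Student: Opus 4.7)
The plan is to prove Theorem~\ref{T2} by a direct combinatorial bijection, interpreting both sides as signed generating functions over ordered pairs of independent sets. Let $\mathcal{I}(G)$ denote the set of independent sets of $G$. Expanding each factor as $I(G,x)=\sum_{S\in\mathcal{I}(G)}x^{|S|}$ and using $\mathcal{I}(G-u)=\{S\in\mathcal{I}(G):u\notin S\}$, every pair $(S,T)$ with $u\notin S\cup T$ cancels between the two terms of the LHS. What remains is the signed sum
\[
\sum_{\substack{(S,T)\in\mathcal{I}(G)\times\mathcal{I}(G)\\ u\in S\triangle T}}\epsilon(S,T)\,x^{|S|}y^{|T|},
\]
where $\epsilon(S,T)=+1$ if $u\in S$ and $\epsilon(S,T)=-1$ if $u\in T$.

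The next step is to attach to each surviving pair a graph $H\in\mathcal{B}_u$. Since $S$ and $T$ are each independent in $G$, the induced subgraph $G[S\triangle T]$ has no edge inside $S\setminus T$ or inside $T\setminus S$, hence it is bipartite with these two color classes. Let $H$ be the connected component of $u$ in $G[S\triangle T]$. Then $H$ is an induced connected bipartite subgraph of $G$ containing $u$, so $H\in\mathcal{B}_u$, and its color class $A(H)$ (the one containing $u$) equals $V(H)\cap S$ when $u\in S$ and $V(H)\cap T$ when $u\in T$.

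For a fixed $H\in\mathcal{B}_u$ I would then show that the pairs $(S,T)$ with $u\in S$ mapping to this $H$ are in natural bijection with ordered pairs $(S'',T'')$ of independent sets of $G-N[H]$, via $S=A(H)\cup S''$ and $T=B(H)\cup T''$. Tracking exponents, such a pair contributes $x^{a(H)+|S''|}y^{b(H)+|T''|}$, and summing over $S'',T''\in\mathcal{I}(G-N[H])$ yields $x^{a(H)}y^{b(H)}I(G-N[H],x)I(G-N[H],y)$. The symmetric case $u\in T$ contributes the negative of $x^{b(H)}y^{a(H)}I(G-N[H],x)I(G-N[H],y)$. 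Summing over $H\in\mathcal{B}_u$ gives the RHS of Theorem~\ref{T2}.

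The main obstacle is verifying that in this correspondence $S''$ and $T''$ genuinely avoid $N[H]$ and not merely $V(H)$. The crux is the following case analysis: if $v\in S\setminus V(H)$ were adjacent to some $w\in V(H)$, then $w\in A(H)\subseteq S$ would contradict the independence of $S$; if instead $w\in B(H)\subseteq T$, then $v\in T$ would contradict the independence of $T$ (via the edge $vw$), while $v\notin T$ would put $vw$ into $G[S\triangle T]$, forcing $v$ into the connected component of $u$ and contradicting $v\notin V(H)$. Here both the independence of $S,T$ and the maximality of $H$ as a connected component are needed; the rest is routine bookkeeping of monomials and signs.
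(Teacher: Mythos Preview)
Your proposal is correct and follows essentially the same approach as the paper's own proof: expand both sides over ordered pairs of independent sets, cancel the pairs with $u\notin S\cup T$, and group the survivors according to the connected component $H$ of $u$ in the bipartite graph induced on the symmetric difference. The paper phrases the second cancellation slightly differently (it keeps only pairs with $u\in A$ and then subtracts the same sum with $x$ and $y$ swapped, rather than tracking a sign $\epsilon(S,T)$), and it takes the component in $G[A\cup B]$ rather than $G[S\triangle T]$; since vertices of $A\cap B$ are isolated in $G[A\cup B]$ these are equivalent. Your explicit case analysis showing $S''$ and $T''$ lie in $G-N[H]$ is in fact more detailed than what the paper writes at that step.
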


All four theorems can be proved with the idea of examining two ``interesting'' (matching or independent set) sets in $G$. In the case of matchings we see cycles and paths, and  in the case of independence sets we see double points and bipartite graphs.


A special case of Theorem 1.3 is related to the so called Merrifield-Simmons conjecture. This conjecture asserts, that for every graph $G$ and $u,v\in V(G)$, the sign of $I(G-u,1)I(G-v,1)-I(G,1)I(G-\{u,v\},1)$ depends only on the parity of the distance of $u$ and $v$ in $G$.
This was claimed to be true without proof  in their book \cite{merrifield}, and became known as the Merrifield-Simmons conjecture.
This conjecture turned out to be false for general graphs, as it was pointed out in \cite{gutman}. On the other hand, it turned out, that the conjecture is true for bipartite graphs \cite{martin}. Now we see that Theorem \ref{T1} implies a slight generalization of this result:
Suppose that  every induced path in $G$ form $u$ to $v$ has the same parity of length, then $(-1)^{d_G(u,v)}=(-1)^{d_H(u,v)}$ for every $H \in \mathcal{B}_{u,v}$. Especally, when $G$ is bipartite, then for every $u,v\in V(G)$ the parity of all pathes from $u$ to $v$ are the same. 

\begin{Cor}
Let $G$ be a bipartite graph, and $u,v\in V(G)$ and $x\in \mathbb{R}^+$. Then
\begin{eqnarray*}
I(G-u,x)I(G-v,x)-I(G,x)I(G-u-v,x)>0 ~ \textrm{if $d_G(u,v)$ is odd}\\
I(G-u,x)I(G-v,x)-I(G,x)I(G-u-v,x)=0 ~ \textrm{if $d_G(u,v)=\infty$}\\
I(G-u,x)I(G-v,x)-I(G,x)I(G-u-v,x)<0 ~ \textrm{if $d_G(u,v)$ is even}.\\
\end{eqnarray*}
\end{Cor}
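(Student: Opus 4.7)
The plan is to apply Theorem~\ref{T1} directly and analyze the sign of each summand. In the bipartite setting, every walk from $u$ to $v$ in $G$ has parity $d_G(u,v) \pmod 2$; this gives immediately the $d_G(u,v)=\infty$ case, because then $u$ and $v$ lie in different components and $\mathcal{B}_{u,v}=\emptyset$, so the right-hand side of Theorem~\ref{T1} is an empty sum.

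Assume now $d_G(u,v)<\infty$. For every $H\in\mathcal{B}_{u,v}$, the subgraph $H$ is bipartite and contains both $u$ and $v$, and any $u$--$v$ path in $H$ is in particular a $u$--$v$ walk in $G$, so its length has the same parity as $d_G(u,v)$. Hence $d_H(u,v)\equiv d_G(u,v)\pmod 2$, and the sign $(-1)^{d_H(u,v)+1}$ in Theorem~\ref{T1} is constant across the sum, equal to $(-1)^{d_G(u,v)+1}$. For $x\in\mathbb{R}^+$, the factor $x^{|V(H)|}$ is positive, and $I(G-N[H],x)$ is positive because the independence polynomial has nonnegative coefficients and constant term $1$; so every summand carries the same strict sign. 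To conclude that the sum is strictly nonzero, I would note that $\mathcal{B}_{u,v}\ne\emptyset$: any shortest $u$--$v$ path in $G$ is induced (otherwise a chord would shorten it), connected, and bipartite, so it lies in $\mathcal{B}_{u,v}$ and contributes a strictly signed term.

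The only nontrivial step is the parity claim $d_H(u,v)\equiv d_G(u,v)\pmod 2$, which is exactly where bipartiteness is used; without it, the summands of Theorem~\ref{T1} could carry different signs and the conclusion would fail (as indeed it does for general graphs, per the Gutman counterexample to the original Merrifield--Simmons conjecture). Everything else is a direct sign reading of the identity of Theorem~\ref{T1} together with the elementary positivity $I(\cdot,x)>0$ for $x>0$.
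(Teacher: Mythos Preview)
Your argument is correct and follows exactly the paper's approach: deduce the corollary from Theorem~\ref{T1} by observing that in a bipartite graph all $u$--$v$ walks share the parity of $d_G(u,v)$, so the sign $(-1)^{d_H(u,v)+1}$ is constant over $\mathcal{B}_{u,v}$. You in fact supply details the paper leaves implicit, namely the $d_G(u,v)=\infty$ case (empty sum) and the nonemptiness of $\mathcal{B}_{u,v}$ via an induced shortest path to obtain strict inequality.
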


Corollary~\ref{C1} and Theorem~\ref{T3} are consequences of the Theorem~\ref{T1} and \ref{T2} using the facts that 
\[
	I'(G,x)=\sum\limits_{u\in V(G)}I(G-N[u],x)
\] and 
\[
	I(G,x)=I(G-u,x)+xI(G-N[u],x).
\]
The proof of Corollary~\ref{C1} can be found in \cite{god3} for matching polynomials and is quite similar for independence polynomials, therefore we will not give  the  detailed proof of it. 

\begin{Cor}\label{C1}
Let $G$ be a graph and $u\in V(G)$. Let $\mathcal{B}_u$ be
 the set of induced connected, bipartite graphs containing the vertex
 $u$. For an $H\in \mathcal{B}_u$ let $A(H)$ be the color class
 containing $u$, and let $B(H)$ be the color class
 not containing $u$, and $|A(H)|=a(H)$ and $|B(H)|=b(H)$. Then
$$xI'(G-u,x)I(G,x)-xI(G-u,x)I'(G,x)=$$
$$=\sum\limits_{H \in B_u} (b(H)-a(H))x^{|V(H)|}I(G-N[H],x)^2.$$
Let $\mathcal{B}$ be the set of induced connected, bipartite graphs. For an $H\in \mathcal{B}$ let $P(H)$ be one of the color classes, and $R(H)$ the other, and let $p(H)=|P(H)|$ and $r(H)=|R(H)|$. Then
\begin{eqnarray*}
x^2I'(G,x)^2-x^2I''(G,x)I(G,x)-xI'(G,x)I(G,x)=\\
=-\sum\limits_{H\in\mathcal{B}}(p(H)-r(H))^2x^{|V(H)|}I(G-N[H],x)^2
\end{eqnarray*}
 \end{Cor}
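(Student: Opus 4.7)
\emph{Proof proposal.} The plan is to deduce both identities of Corollary~\ref{C1} from Theorem~\ref{T2}, together with the standard identities $I(G,x) = I(G-u,x) + xI(G-N[u],x)$ and $I'(G,x) = \sum_{u\in V(G)} I(G-N[u],x)$ recalled in the text. The first identity comes out of Theorem~\ref{T2} by differentiating in $y$ and then setting $y=x$; the second follows by summing the first over $u \in V(G)$ and reindexing.

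For the first identity, I would differentiate both sides of Theorem~\ref{T2} with respect to $y$ and then put $y=x$. On the left this yields $I(G,x)I'(G-u,x) - I(G-u,x)I'(G,x)$. On the right the product rule produces two summands for each $H \in \mathcal{B}_u$; the summand in which $I(G-N[H],y)$ is differentiated carries the prefactor $(x^{a(H)}y^{b(H)} - x^{b(H)}y^{a(H)})$, which vanishes at $y=x$ and so drops out. The surviving summand simplifies to $(b(H)-a(H))x^{|V(H)|-1}I(G-N[H],x)^2$, and multiplying through by $x$ gives the first identity.

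For the second identity, I would sum the first identity over $u \in V(G)$. On the right-hand side, swapping the order of summation turns $\sum_{u}\sum_{H \in \mathcal{B}_u}$ into $\sum_{H \in \mathcal{B}} \sum_{u \in V(H)}$; for $u$ in the color class of size $p(H)$ one has $b(H)-a(H) = r(H)-p(H)$, while for $u$ in the other class the sign flips, so $\sum_{u \in V(H)}(b(H)-a(H)) = p(H)(r(H)-p(H)) + r(H)(p(H)-r(H)) = -(p(H)-r(H))^2$, producing the claimed right-hand side. On the left, writing $n = |V(G)|$, I would use $\sum_u I(G-u,x) = nI(G,x) - xI'(G,x)$ (immediate from the deletion identity) together with its $x$-derivative $\sum_u I'(G-u,x) = (n-1)I'(G,x) - xI''(G,x)$, where $I''(G,x) = \sum_u I'(G-N[u],x)$ is obtained by differentiating $I'(G,x) = \sum_u I(G-N[u],x)$. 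Plugging these into $\sum_u \bigl[xI'(G-u,x)I(G,x) - xI(G-u,x)I'(G,x)\bigr]$ and simplifying, the $n$-terms cancel and the expression collapses to $x^2I'(G,x)^2 - x^2I''(G,x)I(G,x) - xI'(G,x)I(G,x)$.

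The only real obstacle is bookkeeping: one must apply the product rule carefully to see that the summand containing $I'(G-N[H],y)$ is killed at $y=x$, and one must remember when swapping the order of summation that the labeling $A(H), B(H)$ depends on which color class contains $u$. Beyond these points no new idea past Theorem~\ref{T2} is required, which is consistent with the author's remark that the proof is parallel to the matching-polynomial case from \cite{god3}.
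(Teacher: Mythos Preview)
Your proposal is correct and matches what the paper intends. The paper does not actually give a detailed proof of Corollary~\ref{C1}: it only says the corollary follows from Theorems~\ref{T1} and~\ref{T2} together with the identities $I'(G,x)=\sum_u I(G-N[u],x)$ and $I(G,x)=I(G-u,x)+xI(G-N[u],x)$, and refers to \cite{god3} for the parallel matching-polynomial computation. Your route---differentiate Theorem~\ref{T2} in $y$, set $y=x$, multiply by $x$ for the first identity, then sum over $u$ and use $\sum_u I(G-u,x)=nI(G,x)-xI'(G,x)$ and its derivative for the second---is precisely the natural execution of that sketch, and your reindexing $\sum_u\sum_{H\in\mathcal{B}_u}=\sum_{H\in\mathcal{B}}\sum_{u\in V(H)}$ with the observation $\sum_{u\in V(H)}(b(H)-a(H))=-(p(H)-r(H))^2$ is the same computation the paper performs (in the proof of Theorem~\ref{T3}) when summing Theorem~\ref{T2} over $u$.
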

 
 In the proof of  Theorem~\ref{T3} we will follow an argument similar to the one given in \cite{god3} for matching polynomials.

\begin{Th} \label{T3} Let $G$ be a graph. Let $\mathcal{B}$ be the set of induced connected, bipartite graphs. For an $H\in \mathcal{B}$ let $P(H)$ be one of the color classes, and $R(H)$ the other, and let $p(H)=|P(H)|$ and $r(H)=|R(H)|$. Then
\[
yI(G,x)I'(G,y)-xI'(G,x)I(G,y)=
\]
\[
=\sum_{H\in \mathcal{B}}(p(H)-r(H))I(G-N[H],x)I(G-N[H],y)(x^{p(H)}y^{r(H)}-x^{r(H)}y^{p(H)}).
\]
\end{Th}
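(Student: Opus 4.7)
My plan is to derive Theorem~\ref{T3} from Theorem~\ref{T2} by summing the latter over all $u \in V(G)$, following Godsil's treatment of matching polynomials \cite{god3}. The two identities
\begin{equation*}
I(G,t) = I(G-u,t) + tI(G-N[u],t), \qquad I'(G,t) = \sum_{u \in V(G)} I(G-N[u],t),
\end{equation*}
highlighted in the paragraph preceding Corollary~\ref{C1}, are the main tools; combining them yields $\sum_{u \in V(G)} I(G-u,t) = nI(G,t) - tI'(G,t)$, where $n=|V(G)|$.

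For the left-hand side, I would sum the LHS of Theorem~\ref{T2} over $u \in V(G)$ and substitute the expression above (both in $x$ and in $y$). The two $nI(G,x)I(G,y)$ contributions cancel, and the remaining terms assemble (after a sign check) into the left-hand side of Theorem~\ref{T3}.

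For the right-hand side I would exchange the order of summation. A fixed $H \in \mathcal{B}$ with color classes $X$ and $Y$ of sizes $p$ and $r$ appears in $\mathcal{B}_u$ precisely for $u \in V(H)$, and the labelling $(A(H), B(H))$ used in Theorem~\ref{T2} is $(X,Y)$ when $u$ lies in the $p$-class $X$ and $(Y,X)$ when $u \in Y$. Consequently the $p$ vertices of $X$ each contribute $+(x^{p}y^{r} - x^{r}y^{p})$ to the relevant factor, while the $r$ vertices of $Y$ each contribute its negative. Combining, the total contribution of $H$ is
\begin{equation*}
(p-r)(x^{p}y^{r} - x^{r}y^{p})\,I(G-N[H],x)\,I(G-N[H],y),
\end{equation*}
which is symmetric under $p \leftrightarrow r$ and hence independent of the choice of which class is designated $P(H)$. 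This is exactly the $H$-summand on the RHS of Theorem~\ref{T3}.

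The only delicate step is the sign bookkeeping in the double sum: one must verify that the toggle between $(A(H),B(H)) = (X,Y)$ and $(Y,X)$ indeed assembles into the factor $p(H) - r(H)$ (rather than $r(H)-p(H)$) and that the overall sign on the right is consistent with the sign produced by the cancellation on the left. Once this is checked, the identity follows by direct substitution via the two expressions for $I'(G,t)$ and $I(G-u,t)$ recorded at the start.
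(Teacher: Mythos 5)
Your proposal follows essentially the same route as the paper's own proof: sum the identity of Theorem~\ref{T2} over all $u\in V(G)$, use $\sum_{u}I(G-u,t)=nI(G,t)-tI'(G,t)$ on the left, and exchange the order of summation on the right so that each $H\in\mathcal{B}$ is counted once for each of its $p(H)+r(H)$ vertices, the two color classes contributing with opposite signs to give the factor $p(H)-r(H)$. The sign check you defer is genuinely needed: the summation naturally produces $xI'(G,x)I(G,y)-yI(G,x)I'(G,y)$ on the left (as one can confirm on $G=K_1$), which is the negative of the left-hand side as stated in Theorem~\ref{T3} --- a sign inconsistency already present between the paper's statement and its own proof.
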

By an application of Theorem~$\ref{T3}$ we will prove the following theorem of M. Chudnovsky and P. Seymour.
\begin{Th} \label{T4} The independence polynomial of a claw-free graph has only
 real roots.
\end{Th}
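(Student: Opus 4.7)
The plan is to argue by induction on $|V(G)|$ and specialize the Christoffel-Darboux identity of Theorem~\ref{T3} at $x=\xi$, $y=\overline\xi$, where $\xi$ is a hypothetical non-real root of $I(G,x)$.

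For the base cases $|V(G)|\le 1$ the polynomial $I(G,x)$ equals $1$ or $1+x$, both with only real zeros. For the inductive step, assume the statement holds for every claw-free graph on fewer vertices than $G$, and suppose towards a contradiction that $\xi\in\mathbb{C}\setminus\mathbb{R}$ satisfies $I(G,\xi)=0$; since $I(G,x)$ has real coefficients, also $I(G,\overline\xi)=0$. Substituting $x=\xi$, $y=\overline\xi$ in Theorem~\ref{T3}, the left-hand side $\overline\xi\,I(G,\xi)\,I'(G,\overline\xi)-\xi\,I'(G,\xi)\,I(G,\overline\xi)$ vanishes.

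The key structural observation is that in a claw-free graph every induced connected bipartite subgraph $H$ is either a path or an even cycle. Indeed, if a vertex $v\in V(H)$ had three neighbors in $H$, these neighbors would all lie in the opposite color class of $H$, forming an independent set, so together with $v$ they would induce a $K_{1,3}$ in $G$; hence $\Delta(H)\le 2$. Consequently $|p(H)-r(H)|\le 1$, with equality precisely when $H$ is a path on an odd number of vertices (say $2k_H+1$), while $p(H)=r(H)$ for even cycles and for paths on an even number of vertices. Choosing $P(H)$ to be the larger color class for the contributing paths, one gets
\[
(p(H)-r(H))\bigl(\xi^{p(H)}\overline\xi^{\,r(H)}-\xi^{r(H)}\overline\xi^{\,p(H)}\bigr)=|\xi|^{2k_H}(\xi-\overline\xi)=2i\,|\xi|^{2k_H}\,\mathrm{Im}(\xi),
\]
and using $I(G-N[H],\overline\xi)=\overline{I(G-N[H],\xi)}$ the right-hand side of Theorem~\ref{T3} simplifies to
\[
2i\,\mathrm{Im}(\xi)\sum_{H}|\xi|^{2k_H}\,|I(G-N[H],\xi)|^2,
\]
the sum running over induced paths $H$ in $G$ with an odd number of vertices.

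To close the argument it is enough to exhibit one strictly positive summand. The single-vertex subgraph $H=\{v\}$ is a path with $k_H=0$ and contributes $|I(G-N[v],\xi)|^2$; the graph $G-N[v]$ is claw-free on fewer vertices, so by the inductive hypothesis $I(G-N[v],x)$ has only real zeros and therefore $I(G-N[v],\xi)\ne 0$. All remaining summands are nonnegative, so the sum is strictly positive, and since $\mathrm{Im}(\xi)\ne 0$ the right-hand side is nonzero, contradicting the vanishing of the left-hand side. The main obstacle is the combinatorial step that pins down the shape of induced connected bipartite subgraphs of a claw-free graph and recognizes that only odd-vertex paths survive in the specialized identity; once that dichotomy is secured, the rest is the classical Heilmann-Lieb positivity trick adapted to Theorem~\ref{T3}.
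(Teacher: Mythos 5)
Your proposal is correct and follows essentially the same route as the paper: specialize Theorem~\ref{T3} at $x=\xi$, $y=\overline{\xi}$, use claw-freeness to reduce the sum to induced paths on an odd number of vertices (maximum degree $2$ in any induced bipartite subgraph), and derive positivity of the right-hand side via induction/minimal counterexample. The only cosmetic difference is that you keep the factor $\xi-\overline{\xi}$ explicit and single out the one-vertex path to guarantee a strictly positive term, whereas the paper divides by $y-x$ first; the substance is identical.
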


In this paper we prove this theorem using the identity of Theorem~\ref{T2}. Another similar proof of this theorem can be found in \cite{bodo2}. 

This paper is organized as follows. In the next section we prove
Theorem~\ref{T1}, Theorem~\ref{T2} and Theorem~\ref{T3}. In the third
section we prove Theorem~\ref{T4}.

\section{Proof of the Christoffel-Darboux identities}
\begin{proof}[Proof of Theorem~\ref{T1}]
Let $\mathcal{F}(G)$ be the set of the independent sets of $G$. Let $\mathcal{F}_1=\mathcal{F}(G-u)\times\mathcal{F}(G-v)$, and $\mathcal{F}_2=\mathcal{F}(G)\times \mathcal{F}(G-u-v)$.  By definition  $I(G)$ is equal to $\sum\limits_{A\in\mathcal{F}(G)}x^{|A|}$. Then the left hand side of the identity yields:
\[
\left(\sum\limits_{(A,B)\in \mathcal{F}_1}x^{|A|+|B|}\right)- \left(\sum\limits_{(A,B)\in \mathcal{F}_2}x^{|A|+|B|}\right)
\]
We have to understand the relation of two independent sets in $G$. Let $X\subset V(G)$, then let $G[X]$ be the induced graph in $G$. Then let $A$ and $B$ be independent sets of $G$. It follows easily, that if $z\in A\cap B$ than $d_{G[A\cup B]}(z)=0$ and $G[A\bigtriangleup B]$ is a bipartite graph where the color classes are $A-B$ and $B-A$. 

Suppose that $(A,B)\in\mathcal{F}_1$ and $u,v\notin A\cup B$. Then $(A,B)\in\mathcal{F}_2$, because $A,B\in \mathcal{F}(G-u-v)\subset \mathcal{F}(G)$, and it is true vica versa, if $(C,D)\in\mathcal{F}_2$ and $u,v\notin C\cup D$, then $(C,D)\in\mathcal{F}_1$. Therefore there exists a bijection between $\{(A,B)\in \mathcal{F}_1~|~u,v\notin A\cup B\}$ and $\{(A,B)\in \mathcal{F}_2~|~u,v\notin A\cup B\}$, and these terms cancel each other.

Suppose that $(A,B)\in \mathcal{F}_1$ and $v\in A$ and $u\notin B$. Then $A\in \mathcal{F}(G)$ and $B\in \mathcal{F}(G-u-v)$, so $(A,B)\in \mathcal{F}_2$. 

Suppose that $(A,B)\in \mathcal{F}_1$, and $v\notin A$ and $u\in B$. Then $(B,A)\in \mathcal{F}_2$.

Let $\mathcal{F}_1'=\{(A,B)\in \mathcal{F}_1 ~|~ u,v\in A\cup B\}$, and $\mathcal{F}_2'=\{(A,B)\in \mathcal{F}_2 ~|~ u,v\in A\cup B\}$. After simplifying the formula with the bijections we get
\[
\left(\sum\limits_{(A,B)\in \mathcal{F}_1'}x^{|A|+|B|}\right)- \left(\sum\limits_{(A,B)\in \mathcal{F}_2'}x^{|A|+|B|}\right).
\]


Now we see that $u$ and $v$ are always in the color classes, and they are never in $A\cap B$. This means that they are always part of a bipartite graph. 

Suppose that $(A,B)\in \mathcal{F}_1'$, and $u$ and $v$ are not in the same component of $G[A\cup B]$. After switching the colors only in the component of $v$, let the new independent sets be $A'$ and $B'$. Then $u,v \in A' \in\mathcal{F}(G)$ and $u,v\notin B'\in \mathcal{F}(G)$, so $(A',B')\in \mathcal{F}_2'$ and $|A|+|B|=|A'|+|B'|$. It is easy to see that every pair is cancelled, where $u$ and $v$ are not in the same component. Let $\mathcal{F}_1''=\{(A,B)\in \mathcal{F}_1' ~|~ d_{G[A\cup B]}(u,v)<\infty \}$, and $\mathcal{F}_2''=\{(A,B)\in \mathcal{F}_2' ~|~ d_{G[A\cup B]}(u,v)<\infty \}$. We can rewrite the left hand side as
\[
\left(\sum\limits_{(A,B)\in \mathcal{F}_1''}x^{|A|+|B|}\right)- \left(\sum\limits_{(A,B)\in \mathcal{F}_2''}x^{|A|+|B|}\right).
\]
Let us observe, that if $(A,B)\in \mathcal{F}_1''$, then $d_{G[A\cup B]}(u,v)$ is odd, and if $(A,B)\in \mathcal{F}_2''$, then $d_{G[A\cup B]}(u,v)$ is even. So if $A,B$ are independent sets of $G$, their union contains $u$ and $v$, and they are in the same component of the induced graph, then it is easy to recognize whether $(A,B)\in \mathcal{F}_1''$ or $(A,B)\in \mathcal{F}_2''$. Let $(A,B)\in \mathcal{F}_1''\cup \mathcal{F}_2''$, then let $P(A,B)$ be the connected component of $u$ and $v$ in the induced graph. Then
\begin{eqnarray*}
\left(\sum\limits_{(A,B)\in \mathcal{F}_1''\cup\mathcal{F}_2''}(-1)^{d_{P(A,B)}(u,v)+1}x^{|P(A,B)|}x^{|A|+|B|-|P(A,B)|}\right)=\\ 
=\sum\limits_{H \in \mathcal{B}_{u,v}}  (-1)^{d_H(u,v)+1}x^{|V(H)|}\left(\sum\limits_{A,B\in\mathcal{F}(G-N[H])}x^{|A|+|B|} \right)=\\
=\sum\limits_{H \in \mathcal{B}_{u,v}} (-1)^{d_H(u,v)+1}x^{|V(H)|}I(G-N[H],x)^2
\end{eqnarray*}
\end{proof}
\begin{proof}[Proof of Theorem~\ref{T2}]
We will use the same argument as in the previous proof. Let $\mathcal{F}(G)$ be the set of independent sets and let $\mathcal{F}_1=\mathcal{F}(G)\times \mathcal{F}(G-u)$ and let $\mathcal{F}_2=\mathcal{F}(G-u)\times \mathcal{F}(G)$. Then the left hand side is equal to
\begin{eqnarray*}
\sum\limits_{(A,B)\in \mathcal{F}_1} x^{|A|}y^{|B|} - \sum\limits_{(A,B)\in \mathcal{F}_2} x^{|A|}y^{|B|}
\end{eqnarray*}

Suppose that $(A,B)\in \mathcal{F}_1$ and $u\notin A$ and $(C,D)\in \mathcal{F}_2$ and $u\notin D$, then $(A,B)\in \mathcal{F}_2$ and $(C,D)\in \mathcal{F}_1$, so there is a cancellation. Let $\mathcal{F}=\mathcal{F}_1\setminus \mathcal{F}(G-u)\times \mathcal{F}(G-u)$. Then the left hand side is equal to
\[
\sum\limits_{(A,B)\in \mathcal{F}} x^{|A|}y^{|B|}-\sum\limits_{(A,B)\in \mathcal{F}} x^{|B|}y^{|A|}.
\]

Note that for all $(A,B) \in \mathcal{F}$, $u$ always in $A$ and $u\notin A\cap B$. Let $P(A,B)$ be the connected component of the graph induced by the set by $A\cup B$ which contains $u$. Then we can write the following.
\begin{eqnarray*}
\sum\limits_{(A,B)\in \mathcal{F}} x^{|A|}y^{|B|}=\\
=\sum\limits_{(A,B)\in \mathcal{F}} x^{a(P(A,B))}y^{b(P(A,B))} x^{|A|-a(P(A,B))}y^{|B|-b(P(A,B))} =\\
=\sum\limits_{H\in B_u}x^{a(H)}y^{b(H)}\sum\limits_{K,L\in \mathcal{F}(G-[H])}x^{|K|}y^{|L|}=\\
=\sum\limits_{H\in B_u}x^{a(H)}y^{b(H)}I(G-N[H],x)I(G-N[H],y)
\end{eqnarray*}
We get the same formula for the second sum.
\begin{eqnarray*}
\sum\limits_{(A,B)\in \mathcal{F}} x^{|B|}y^{|A|}=\\
=\sum\limits_{H\in B_u}y^{a(H)}x^{b(H)}I(G-N[H],y)I(G-N[H],x)
\end{eqnarray*}
The left hand side of the identity is equal to
\begin{eqnarray*}
=\sum\limits_{H\in B_u}(x^{a(H)}y^{b(H)}-y^{a(H)}x^{b(H)})I(G-N[H],x)I(G-N[H],y)
\end{eqnarray*}
\end{proof}
\begin{proof}[Proof of Theorem~\ref{T3}]
We use the facts that \[I'(G,x)=\sum\limits_{u\in V(G)}I(G-N[u],x) \]
and \[I(G,x)=I(G-u,x)+xI(G-N[u],x).\]
Let $n=|V(G)|$, and by combinating these two formulae we get
\[
\sum\limits_{u\in V(G)}I(G-u,x)=nI(G,x)-xI'(G,x).
\]
Let us sum the identity of Theorem \ref{T2} for all $u\in V(G)$ and apply the above identities. 
\begin{eqnarray*}
\sum\limits_{u\in V(G)}\left( I(G,x)I(G-u,y)-I(G-u,x)I(G,y)\right)=\\
=I(G,x)\sum\limits_{u\in V(G)}I(G-u,y) - I(G,y)\sum\limits_{u\in V(G)}I(G-u,x)=\\
=I(G,x)(nI(G,y)-yI'(G,y))-I(G,y)(nI(G,x)-xI'(G,x))=\\
=xI'(G,x)I(G,y)-yI(G,x)I'(G,y)=
\end{eqnarray*}
\begin{eqnarray*}
=\sum\limits_{u\in V(G)}\sum\limits_{H\in B_u} (x^{a(H)}y^{b(H)}-y^{a(H)}x^{b(H)})I(G-N[H],x)I(G-N[H],y)=\\
\sum\limits_{H\in\mathcal{B}}(p(H)-r(H))(x^{p(H)}y^{r(H)}-y^{p(H)}x^{r(H)})I(G-N[H],x)I(G-N[H],y)
\end{eqnarray*}
\end{proof}
\section{Claw-free graphs} 
Let us prove Theorem \ref{T4} by applying the identity of Theorem \ref{T3}. First of all, note that every induced connected bipartite subgraph of a claw-free graph is a path or a cycle. Indeed, claw-freeness implies that every degree in an induced bipartite graph is at most 2, connectedness implies that it is a path or a cycle. Let P be a path on even number of vertices, then $p(P)-r(P)=0$. The same holds for a cycle. If $|P|$ is odd, then choose $P(P)$ and $R(P)$ in a way that $p(P)-r(P)=1$.
Let $\mathcal{P}$ be the set of paths in $G$ with odd vertices. Then we get the following.
\[
\frac{yI(G,x)I'(G,y)-xI'(G,x)I(G,y)}{y-x}=\sum\limits_{P\in \mathcal{P}}x^{r(P)}y^{r(P)}I(G-N[P],x)I(G-N[P],y)
\]

Suppose that $G$ is the smallest counterexample, such that $I(G,x)$ has not only real roots. Let us denote one of these roots by $\xi$. Then evaluate this formula with $x=\xi$ and $y=\bar{\xi}$. Thus, the left hand side is 0. But on the right hand side every $G-N[P]$ is an induced graph, so these are also claw-free graphs with less vertices than $G$. All in all, $I(G-N[P],\xi)\neq 0$, and this means that the right hand side is positive, but it gives a contradiction.

\vspace{10pt}\noindent \textbf{Acknowledgments.}
 I would like to express my sincere gratitude to P\'eter Csikv\'ari for introducing me a set of problems which led to this paper. I would also like to thank for his gentle guidance and great effort which helped my work run smoothly. These results would not have been achieved without his supervision.
 
\nocite{god4, hei, bodo}
\bibliography{hivatkozat}
\bibliographystyle{unsrt}

\end{document}